\newtheorem{thm}{Theorem}[section]
\newtheorem{cor}[thm]{Corollary}
\newtheorem{lem}[thm]{Lemma}
\newtheorem{exm}[thm]{Example}
\newtheorem{prop}[thm]{Proposition}
\theoremstyle{definition}
\newtheorem{defn}[thm]{Definition}
\theoremstyle{remark}
\newtheorem{rem}[thm]{\bf Remark}
\numberwithin{equation}{section}
\begin{document}
\title[A note on morphisms determined by objects]
{A note on morphisms determined by objects}

\author[Xiao-Wu Chen, Jue Le] {Xiao-Wu Chen, Jue Le}

\thanks{The authors are supported by National Natural Science Foundation of China (No.s 11101388 and 11201446) and NCET-12-0507.}
\subjclass[2010]{18E30, 13E10, 16E50}
\date{\today}

\thanks{E-mail:
xwchen$\symbol{64}$mail.ustc.edu.cn, juele$\symbol{64}$ustc.edu.cn}
\keywords{determined morphisms, dualizing variety, Serre duality}%

\maketitle

\dedicatory{}%
\commby{}%

\begin{abstract}
We prove that a Hom-finite additive category having determined morphisms on both sides is a dualizing variety. This complements a result by Krause. We prove that in a Hom-finite abelian category  having Serre duality, a morphism is right determined by some object if and only if it is an epimorphism. We give a characterization to abelian categories having Serre duality via determined morphisms.
\end{abstract}

\section{Introduction}

 Let $\mathcal{C}$ be an additive category which is skeletally small, that is, the iso-classes of objects form a set. Let $Y$ be an object in $\mathcal{C}$. Let  $\alpha\colon X\rightarrow Y$ be a morphism and $C$ be an object in $\mathcal{C}$. Set $\Gamma(C)={\rm End}_\mathcal{C}(C)^{\rm op}$ to be the opposite ring of the endomorphism ring of $C$.  Consider the induced map ${\rm Hom}_\mathcal{C}(C, \alpha)\colon {\rm Hom}_\mathcal{C}(C, X)\rightarrow {\rm Hom}_\mathcal{C}(C, Y)$ between left $\Gamma(C)$-modules. The image ${\rm Im}\; {\rm Hom}_\mathcal{C}(C, \alpha)$ is a $\Gamma(C)$-submodule of ${\rm Hom}_\mathcal{C}(C, Y)$.

 Recall from \cite{Aus1, Aus2} that $\alpha \colon X\rightarrow Y$ is \emph{right determined} by $C$ provided that for any morphism $t\colon T\rightarrow Y$, ${\rm Im}\; {\rm Hom}_\mathcal{C}(C, t)\subseteq {\rm Im}\; {\rm Hom}_\mathcal{C}(C, \alpha)$ implies that $t$ factors through $\alpha$, that is, there exists a morphism $t'\colon T\rightarrow X$ with $t=\alpha\circ t'$.

 For a $\Gamma(C)$-submodule $H$ of ${\rm Hom}_\mathcal{C}(C, Y)$, we say that the pair $(C, H)$ is \emph{right represented} by a morphism $\alpha\colon X\rightarrow Y$ provided that $\alpha$ is right determined by $C$ and $H={\rm Im}\; {\rm Hom}_\mathcal{C}(C, \alpha)$.

The following notion is essentially contained in \cite[Definition 2.6]{Kr}.

 \begin{defn}\label{defn:1}
An object $Y$ in $\mathcal{C}$ is \emph{right classified} provided that the following hold:
\begin{enumerate}
 \item[(RC1)] each morphism $\alpha\colon X\rightarrow Y$ ending at $Y$ is right determined by some object $C$;
 \item[(RC2)] for any object $C$ and  any $\Gamma(C)$-submodule $H$ of ${\rm Hom}_\mathcal{C}(C, Y)$, the pair $(C, H)$ is  right represented by some morphism $\alpha \colon X\rightarrow Y$.
 \end{enumerate}
 The additive category $\mathcal{C}$ is said to  \emph{have right determined morphisms} if each object is right classified. \hfill $\square$
 \end{defn}

 Let us justify this terminology.  Two morphisms $\alpha_1\colon X_1\rightarrow Y$ and $\alpha_2\colon X_2\rightarrow Y$ are \emph{right equivalent} if $\alpha_1$ factors through $\alpha_2$ and $\alpha_2$ factors through $\alpha_1$. The corresponding right equivalence class is denoted by $[\alpha_1\rangle=[\alpha_2\rangle$. Following \cite{Rin2}, we denote by $[\longrightarrow Y\rangle$ the set of right equivalence classes of morphisms ending at $Y$. It is indeed a set, since $\mathcal{C}$ is skeletally small.

  If two morphisms $\alpha_1$ and $\alpha_2$ are right equivalent, then $\alpha_1$ is right determined by $C$ if and only if so is $\alpha_2$. So it makes sense to say that the class $[\alpha_1\rangle$ is right determined by $C$. We denote by $^C[\longrightarrow Y\rangle$ the subset of $[\longrightarrow Y\rangle$ formed by classes which are right determined by $C$. Then (RC1) is equivalent to
  \begin{align}\label{equ:1}
  [\longrightarrow Y\rangle=\bigcup {^C[\longrightarrow Y\rangle},
  \end{align}
   where $C$ runs over all objects in $\mathcal{C}$.

  We denote by ${\rm Sub}\; {\rm Hom}_\mathcal{C}(C, Y)$ the set of $\Gamma(C)$-submodules of ${\rm Hom}_\mathcal{C}(C, Y)$. The following map is well-defined
  $$\eta_{C, Y}\colon [\longrightarrow Y\rangle \longrightarrow {\rm Sub}\; {\rm Hom}_\mathcal{C}(C, Y), \quad [\alpha \rangle \mapsto {\rm Im}\; {\rm Hom}_\mathcal{C}(C, \alpha).$$
  The restriction of $\eta_{C, Y}$ on  $^C[\longrightarrow Y\rangle$ is injective by the following direct consequence of the definition.

  \begin{lem}\label{lem:REHom}
  Let $\alpha_1\colon X_1\rightarrow Y$ and $\alpha_2\colon X_2\rightarrow Y$ be two morphisms that are right determined by some object $C$. Then $\alpha_1$ is right equivalent to $\alpha_2$ if and only if ${\rm Im}\; {\rm Hom}_\mathcal{C}(C, \alpha_1)={\rm Im}\; {\rm Hom}_\mathcal{C}(C, \alpha_2)$. \hfill $\square$
  \end{lem}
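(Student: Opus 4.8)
The plan is to prove both implications directly from the definitions, treating each direction separately. The key observation is that a factorization of morphisms translates into a containment of images under the covariant functor ${\rm Hom}_\mathcal{C}(C, -)$, while the right-determined hypothesis supplies the converse passage, from containment of images back to factorization of morphisms.

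For the forward implication, I would assume that $\alpha_1$ and $\alpha_2$ are right equivalent. By definition $\alpha_1$ factors through $\alpha_2$, say $\alpha_1 = \alpha_2 \circ s$ for some morphism $s\colon X_1 \rightarrow X_2$. Applying the functor ${\rm Hom}_\mathcal{C}(C, -)$ gives ${\rm Hom}_\mathcal{C}(C, \alpha_1) = {\rm Hom}_\mathcal{C}(C, \alpha_2) \circ {\rm Hom}_\mathcal{C}(C, s)$, so that ${\rm Im}\; {\rm Hom}_\mathcal{C}(C, \alpha_1) \subseteq {\rm Im}\; {\rm Hom}_\mathcal{C}(C, \alpha_2)$. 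Running the symmetric argument with the factorization of $\alpha_2$ through $\alpha_1$ yields the reverse containment, whence the two images coincide. This half uses only functoriality and does not invoke the right-determined hypothesis at all.

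For the converse, I would assume that the two images are equal and then invoke the right-determined hypothesis for each $\alpha_i$ in turn. Taking $t = \alpha_1$ in the definition of right determination applied to $\alpha_2$, the containment ${\rm Im}\; {\rm Hom}_\mathcal{C}(C, \alpha_1) \subseteq {\rm Im}\; {\rm Hom}_\mathcal{C}(C, \alpha_2)$, which holds by the assumed equality, forces $\alpha_1$ to factor through $\alpha_2$. Symmetrically, taking $t = \alpha_2$ against the right-determination of $\alpha_1$ shows that $\alpha_2$ factors through $\alpha_1$. Together these two factorizations are exactly the definition of right equivalence.

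Since each step is an immediate unwinding of the definitions, there is no genuine obstacle here; the statement is indeed a direct consequence of the definition. The only point requiring a little care is to keep track, in the converse, of which of $\alpha_1$ and $\alpha_2$ is supplying the right-determination hypothesis in each half, so that the correct morphism plays the role of the test morphism $t$.
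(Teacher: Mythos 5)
Your proof is correct and is exactly the direct unwinding of the definitions that the paper has in mind: the paper states this lemma without proof as a ``direct consequence of the definition,'' and your two implications (functoriality of ${\rm Hom}_\mathcal{C}(C,-)$ for the forward direction, and testing each $\alpha_i$ against the right-determination of the other for the converse) are the intended argument.
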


  Then (RC2) is equivalent to the surjectivity of this restriction. In other words, (RC2) is equivalent to the bijection
  \begin{align}\label{equ:2}
  ^C[\longrightarrow Y\rangle \stackrel{\sim}\longrightarrow {\rm Sub}\; {\rm Hom}_\mathcal{C}(C, Y), \quad [\alpha\rangle \mapsto {\rm Im}\; {\rm Hom}_\mathcal{C}(C, \alpha).
  \end{align}
This bijection is known as the \emph{Auslander bijection} at $Y$; see \cite{Rin2}.

 In summary, an object $Y$ is right classified if and only if  (\ref{equ:1}) and (\ref{equ:2}) hold.  In this case,  all morphisms ending at $Y$ are classified by the pairs $(C, H)$ of objects $C$ and $\Gamma(C)$-submodules $H$ of ${\rm Hom}_\mathcal{C}(C, Y)$.

The dual notion is as follows.

\begin{defn}
An object $Y$ in $\mathcal{C}$ is \emph{left classified} if it is right classified as an object in the opposite category $\mathcal{C}^{\rm op}$.
The additive category $\mathcal{C}$ is said to  \emph{have left determined morphisms} if each object is left classified.

 The additive category $\mathcal{C}$ \emph{has determined morphisms} if it has both right and left determined morphisms. \hfill $\square$
\end{defn}

One of the fundamental results is that the category $A\mbox{-mod}$ of finitely generated modules over an artin algebra $A$
 has determined morphisms; for example, see \cite{ARS, Rin1}. This result is extended to dualizing $k$-varieties for a commutative artinian ring $k$ in \cite{Kr}. We prove that the converse is true. More precisely, if an additive category $\mathcal{C}$ is $k$-linear which is Hom-finite and has determined morphisms, then it is a dualizing $k$-variety; see Proposition \ref{prop:Dua}.  When the category $\mathcal{C}$ is abelian having  Serre duality, we prove that a morphism is right determined by some object if and only if it is an epimorphism, and dually,  a morphism is left determined by some object if and only if it is a monomorphism; see Remark \ref{rem:1}(1). Indeed, we give a characterization to abelian categories having Serre duality via determined morphisms; see Theorem \ref{thm:1}. In particular, we point out that a non-trivial abelian category having Serre duality is not a dualizing $k$-variety; see Remark \ref{rem:1}(2).

\section{Categories having determined morphisms}

Let $k$ be a commutative artinian ring with a unit, and  let $\mathcal{C}$ be a $k$-linear additive category. We assume that $\mathcal{C}$ is \emph{Hom-finite}, that is, the $k$-module ${\rm Hom}_\mathcal{C}(X, Y)$ is finitely generated for any objects $X$ and $Y$ in $\mathcal{C}$. We suppose further that $\mathcal{C}$ is skeletally small, meaning that the iso-classes of objects in $\mathcal{C}$ form a set.

We denote by $k\mbox{-mod}$ the abelian category of finitely generated $k$-modules. Let $E$ be the minimal injective cogenerator of $k$. Then we have the duality $D={\rm Hom}_k(-, E)\colon k\mbox{-mod}\rightarrow k\mbox{-mod}$ with $D^2\simeq {\rm Id}_{k\mbox{-}{\rm mod}}$.

Denote by $(\mathcal{C}, k\mbox{-mod})$ the abelian category of $k$-linear functors from $\mathcal{C}$ to $k\mbox{-mod}$. The above duality $D$ induces a duality
\begin{align}\label{equ:du}
D\colon (\mathcal{C}, k\mbox{-mod})\stackrel{\sim}\longrightarrow (\mathcal{C}^{\rm op}, k\mbox{-mod})^{\rm op}
\end{align}
 sending a functor $F$ to $DF$. Here, $\mathcal{C}^{\rm op}$ denotes the opposite category of $\mathcal{C}$.

Recall the Yoneda embedding $\mathcal{C}\rightarrow (\mathcal{C}^{\rm op}, k\mbox{-mod})$  sending $X$ to ${\rm Hom}_\mathcal{C}(-, X)$. Then we have the following natural isomorphisms
\begin{align}\label{equ:Yoneda1}
{\rm Hom}_{(\mathcal{C}^{\rm op}, k\mbox{-}{\rm mod})}({\rm Hom}_\mathcal{C}(-, C'), F)\stackrel{\sim}\longrightarrow F(C')\stackrel{\sim}\longrightarrow {\rm Hom}_{\Gamma(C)}({\rm Hom}_\mathcal{C}(C, C'), F(C))
\end{align}
for any $F\in (\mathcal{C}^{\rm op}, k\mbox{-mod})$ and $C, C'\in \mathcal{C}$ with $C'\in {\rm add}\; C$. Here, ${\rm add}\; C$ denotes the full subcategory formed by direct summands of finite direct sums of $C$, and $\Gamma(C)={\rm End}_\mathcal{C}(C)^{\rm op}$. This composite sends a morphism $\eta$ to $\eta_C$. The left isomorphism is known as the Yoneda Lemma, from which it follows that ${\rm Hom}_\mathcal{C}(-, C')$ is a projective object in $(\mathcal{C}^{\rm op}, k\mbox{-mod})$.

By (\ref{equ:Yoneda1}) and the duality (\ref{equ:du}), we have the following natural isomorphisms
\begin{align}\label{equ:Yoneda}
{\rm Hom}_{(\mathcal{C}^{\rm op}, k\mbox{-}{\rm mod})}(F, D{\rm Hom}_\mathcal{C}(C', -))\stackrel{\sim}\longrightarrow DF(C')\stackrel{\sim}\longrightarrow {\rm Hom}_{\Gamma(C)}(F(C),D{\rm Hom}_\mathcal{C}(C', C))
\end{align}
for any $F\in (\mathcal{C}^{\rm op}, k\mbox{-mod})$ and $C, C'\in \mathcal{C}$ with $C'\in {\rm add}\; C$. The composite sends $\eta$ to $\eta_C$.

 A functor $F\colon \mathcal{C}^{\rm op}\rightarrow k\mbox{-mod}$ is \emph{finitely generated} if there is an epimorphism ${\rm Hom}_\mathcal{C}(-, Y)\rightarrow F$ for some object $Y$; it is \emph{finitely cogenerated} if there is a monomorphism $F\rightarrow D{\rm Hom}_\mathcal{C}(C, -)$ for some object $C$, or equivalently, its dual $DF$ is finitely generated. The functor $F\colon \mathcal{C}^{\rm op}\rightarrow k\mbox{-mod}$ is \emph{finitely presented} if there is an exact sequence of functors
 $${\rm Hom}_\mathcal{C}(-, X)\longrightarrow {\rm Hom}_\mathcal{C}(-, Y)\longrightarrow F\longrightarrow 0.$$
 We denote by  ${\rm fp}(\mathcal{C})$ the full subcategory of $(\mathcal{C}^{\rm op}, k\mbox{-mod})$ consisting of finitely presented functors.

Following \cite[Section 2]{AR}, the category $\mathcal{C}$ is a \emph{dualizing $k$-variety} provided that  any functor $F\colon \mathcal{C}^{\rm op}\rightarrow k\mbox{-mod}$ is finitely presented if and only if so is its dual $DF$. In this case, the subcategory ${\rm fp}(\mathcal{C})\subseteq (\mathcal{C}^{\rm op}, k\mbox{-mod})$ is \emph{exact abelian}, meaning that it is closed under kernels, cokernels and images; consult \cite[Theorem 2.4]{AR}. We mention that by definition $\mathcal{C}$ is a dualizing $k$-variety if and only if so is $\mathcal{C}^{\rm op}$.

The aim of this section is to prove the following result. The implication ``$(3)\Rightarrow (1)$" is given in \cite[Corollary 2.13]{Kr}. We mention that the implication ``$(1)\Rightarrow (3)$" is somewhat implicit in the argument in \cite[Sections 3 and 5]{Kr}. Hence, Proposition \ref{prop:Dua} is simply missed in \cite{Kr}. Here we make this result explicit.

\begin{prop}\label{prop:Dua}
Let $\mathcal{C}$ be a Hom-finite $k$-linear additive category which is skeletally small. Then the following statements are equivalent:
\begin{enumerate}
\item the category $\mathcal{C}$ has determined morphisms;
\item for any functor $F$ in $(\mathcal{C}, k\mbox{-}{\rm mod})$ or $(\mathcal{C}^{\rm op}, k\mbox{-}{\rm mod})$, $F$ is finite presented if and only if $F$ is finitely generated and finitely cogenerated;
\item the category $\mathcal{C}$ is a dualizing $k$-variety.
\end{enumerate}
\end{prop}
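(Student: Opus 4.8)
The plan is to translate the whole statement into the functor category $(\mathcal{C}^{\rm op}, k\mbox{-mod})$ and its dual, reducing ``having determined morphisms'' to a closure property of finitely presented functors. Since $\mathcal{C}$ has determined morphisms if and only if both $\mathcal{C}$ and $\mathcal{C}^{\rm op}$ have right determined morphisms, and since condition (2) and ``dualizing $k$-variety'' are both self-dual, it suffices to analyze the right-hand conditions in $(\mathcal{C}^{\rm op}, k\mbox{-mod})$ and transport everything through the duality (\ref{equ:du}). I would establish (1)$\Leftrightarrow$(2) by a dictionary between morphisms ending at $Y$ and finitely presented functors, and (2)$\Leftrightarrow$(3) by formal manipulation with $D$ together with the cited exactness of ${\rm fp}(\mathcal{C})$. (One could instead invoke \cite[Corollary 2.13]{Kr} for ``(3)$\Rightarrow$(1)'', but the functorial route yields all implications uniformly.)

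The technical heart is a lemma identifying right determination with cogeneration. To each $\alpha\colon X\to Y$ I associate the finitely presented functor $F_\alpha={\rm coker}({\rm Hom}_\mathcal{C}(-,\alpha))$, so that $F_\alpha(T)={\rm Hom}_\mathcal{C}(T,Y)/{\rm Im}\,{\rm Hom}_\mathcal{C}(T,\alpha)$ and $t\colon T\to Y$ factors through $\alpha$ exactly when its class $\bar t$ in $F_\alpha(T)$ vanishes. I claim $\alpha$ is right determined by $C$ if and only if $F_\alpha$ embeds into a finite power of $D{\rm Hom}_\mathcal{C}(C,-)$, equivalently $F_\alpha$ is finitely cogenerated by an object of ${\rm add}\,C$. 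For ``$\Leftarrow$'': given $\Phi\colon F_\alpha\hookrightarrow D{\rm Hom}_\mathcal{C}(C,-)^n$ and ${\rm Im}\,{\rm Hom}_\mathcal{C}(C,t)\subseteq {\rm Im}\,{\rm Hom}_\mathcal{C}(C,\alpha)$, one has $\overline{t\circ s}=0$ in $F_\alpha(C)$ for every $s\colon C\to T$, and evaluating the resulting identities at the identity endomorphism of $C$ (after unwinding $D{\rm Hom}_\mathcal{C}(C,-)$) forces $\Phi_T(\bar t)=0$, whence $\bar t=0$. For ``$\Rightarrow$'': since $\Gamma(C)$ is an artin algebra and $F_\alpha(C)$ has finite length, I choose an embedding $F_\alpha(C)\hookrightarrow D\Gamma(C)^n$; by (\ref{equ:Yoneda}) it extends to $\Phi\colon F_\alpha\to D{\rm Hom}_\mathcal{C}(C,-)^n$ with $\Phi_C$ injective, and naturality combined with right determination upgrades injectivity at $C$ to injectivity at every object.

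Granting this Key Lemma I read off the translation. Every finitely presented $F$ in $(\mathcal{C}^{\rm op}, k\mbox{-mod})$ is some $F_\alpha$, so (RC1) for all objects is precisely ``every finitely presented functor is finitely cogenerated'', the finitely generated clause being automatic. For (RC2), given $C$ and a $\Gamma(C)$-submodule $H\subseteq {\rm Hom}_\mathcal{C}(C,Y)$, I embed ${\rm Hom}_\mathcal{C}(C,Y)/H\hookrightarrow D\Gamma(C)^n$ and use (\ref{equ:Yoneda}) to produce $g\colon {\rm Hom}_\mathcal{C}(-,Y)\to D{\rm Hom}_\mathcal{C}(C,-)^n$; its image $F$ is finitely generated and finitely cogenerated with $({\rm ker}\,g)(C)=H$. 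If finitely generated plus finitely cogenerated forces finitely presented, then $F=F_\alpha$ for some $\alpha\colon X\to Y$ (a finitely generated subfunctor of ${\rm Hom}_\mathcal{C}(-,Y)$ is the image of some ${\rm Hom}_\mathcal{C}(-,\alpha)$), and the Key Lemma makes $\alpha$ right determined by $C$ with ${\rm Im}\,{\rm Hom}_\mathcal{C}(C,\alpha)=H$, uniqueness coming from Lemma \ref{lem:REHom}. This equivalence and its evident converse show that $\mathcal{C}$ has right determined morphisms iff in $(\mathcal{C}^{\rm op}, k\mbox{-mod})$ one has f.p.\ $=$ f.g.\ $+$ f.c.; applying (\ref{equ:du}) to the opposite category handles the left side and yields (1)$\Leftrightarrow$(2).

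Finally (2)$\Leftrightarrow$(3) is formal once one records the dictionary $F$ finitely generated $\Leftrightarrow$ $DF$ finitely cogenerated and $F$ finitely cogenerated $\Leftrightarrow$ $DF$ finitely generated, immediate from (\ref{equ:du}) and $D^2\simeq{\rm Id}$. Assuming (2), if $F$ is finitely presented it is f.g.\ $+$ f.c., hence $DF$ is f.g.\ $+$ f.c., hence finitely presented by (2) for the dual category; with the converse this gives $F$ f.p.\ $\Leftrightarrow$ $DF$ f.p., i.e.\ (3). Conversely, assuming (3): f.p.\ $\Rightarrow$ f.g.\ is trivial and f.p.\ $\Rightarrow DF$ f.p.\ $\Rightarrow DF$ f.g.\ $\Rightarrow F$ f.c.; for the reverse, given $F$ f.g.\ $+$ f.c.\ I pick an epimorphism ${\rm Hom}_\mathcal{C}(-,Y)\to F$ and a monomorphism $F\to D{\rm Hom}_\mathcal{C}(C,-)$, note the target is finitely presented in a dualizing variety (being $D$ of the projective ${\rm Hom}_\mathcal{C}(C,-)$), and identify $F$ with the image of the composite, which lies in ${\rm fp}(\mathcal{C})$ since that subcategory is closed under images. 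I expect the Key Lemma---in particular promoting the pointwise injectivity of $\Phi$ at $C$ to a monomorphism of functors by means of right determination---to be the main obstacle; the (RC2) realization and the image-closure step are the other places where genuine content rather than bookkeeping enters.
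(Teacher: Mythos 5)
Your proposal is correct and follows essentially the same route as the paper: your Key Lemma is the paper's Lemma \ref{lem:RD} (a finite power of $D{\rm Hom}_\mathcal{C}(C,-)$ being the same as $D{\rm Hom}_\mathcal{C}(C',-)$ for $C'\in{\rm add}\,C$ up to summands), your realization of pairs $(C,H)$ is Lemma \ref{lem:CH}, and your treatment of (2)$\Leftrightarrow$(3) via the dictionary under $D$ and image-closure of ${\rm fp}(\mathcal{C})$ is the paper's Lemma \ref{lem:Dual}. The naturality arguments you sketch for both directions of the Key Lemma (evaluating at ${\rm id}_C$, and upgrading injectivity at $C$ to injectivity everywhere via right determination) are exactly the ones used in the paper's proof of the exactness of the sequence (\ref{equ:FE}).
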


\begin{proof}
The equivalence between $(1)$ and $(2)$ follows from Corollary \ref{cor:RD} and its dual, while the equivalence between $(2)$ and (3) follows from Lemma \ref{lem:Dual}.
\end{proof}

The following result is well-known and implicit in \cite[Proposition 3.1]{AR}.

\begin{lem}\label{lem:Dual}
Let $\mathcal{C}$ be as above. Then $\mathcal{C}$ is a dualizing $k$-variety if and only if the following two conditions hold:
\begin{itemize}
\item[(1)] any functor $F\colon \mathcal{C}^{\rm op}\rightarrow k\mbox{-{\rm mod}}$ is finitely presented $\Longleftrightarrow$ it is finitely generated and finitely cogenerated;
\item[(2)] any functor $F\colon \mathcal{C}\rightarrow k\mbox{-{\rm mod}}$ is finitely presented $\Longleftrightarrow$ it is finitely generated and finitely cogenerated;
\end{itemize}
\end{lem}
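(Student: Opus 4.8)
The plan is to prove the equivalence by translating everything through the duality $D$ and the standard characterizations of finite presentation. Throughout I would record the two facts that follow directly from the definitions: a finitely presented functor is automatically finitely generated, and, by the very definition of finite cogeneration, a functor $F$ is finitely cogenerated if and only if $DF$ is finitely generated. Since $D^2\simeq {\rm Id}$, this yields the two dictionary entries ``$F$ finitely cogenerated $\Leftrightarrow$ $DF$ finitely generated'' and ``$F$ finitely generated $\Leftrightarrow$ $DF$ finitely cogenerated'', valid for functors on either side.

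For the ``if'' direction I would assume (1) and (2) and show directly that $F$ is finitely presented if and only if $DF$ is, for every $F\colon \mathcal{C}^{\rm op}\to k\mbox{-mod}$; this is precisely the definition of a dualizing $k$-variety. Indeed, by (1) the functor $F$ is finitely presented exactly when it is finitely generated and finitely cogenerated. Applying (2) to the covariant functor $DF$ and using the dictionary above, $DF$ is finitely presented exactly when $DF$ is finitely generated and finitely cogenerated, that is, exactly when $F$ is finitely cogenerated and finitely generated. These two conditions coincide, so $F$ is finitely presented if and only if $DF$ is. No further input is needed; this direction is purely formal.

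For the ``only if'' direction I would assume $\mathcal{C}$ is dualizing and prove (1); statement (2) then follows by applying the same argument to $\mathcal{C}^{\rm op}$, which is again dualizing. The implication ``finitely presented $\Rightarrow$ finitely generated and finitely cogenerated'' is easy: finite generation is immediate from a presentation, and since $\mathcal{C}$ is dualizing, $F$ finitely presented forces $DF$ finitely presented, hence $DF$ finitely generated, hence $F$ finitely cogenerated. The substantive implication is the converse. Given $F$ finitely generated and finitely cogenerated, I would choose an epimorphism $p\colon {\rm Hom}_\mathcal{C}(-,Y)\to F$ and a monomorphism $\iota\colon F\to D{\rm Hom}_\mathcal{C}(C,-)$. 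The source ${\rm Hom}_\mathcal{C}(-,Y)$ is representable, hence finitely presented, and the target $D{\rm Hom}_\mathcal{C}(C,-)$ is finitely presented because ${\rm Hom}_\mathcal{C}(C,-)$ is a representable (hence finitely presented) covariant functor and $\mathcal{C}$ is dualizing. Thus $q=\iota\circ p$ is a morphism between two objects of ${\rm fp}(\mathcal{C})$; since $p$ is an epimorphism, the image of $q$ equals the image of $\iota$, which is isomorphic to $F$. Because $\mathcal{C}$ is dualizing, ${\rm fp}(\mathcal{C})$ is exact abelian and in particular closed under images, and therefore $F$ is finitely presented.

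The main obstacle is exactly this last step: passing from the two one-sided finiteness conditions to genuine finite presentation. It is here that the full strength of the dualizing hypothesis is used, both to guarantee that the injective cogenerators $D{\rm Hom}_\mathcal{C}(C,-)$ are themselves finitely presented and to guarantee that ${\rm fp}(\mathcal{C})$ is closed under images, so that $F$, realized as the image of a map between finitely presented functors, is again finitely presented. The remaining bookkeeping --- checking that the dictionary entries are applied on the correct side and that the symmetric argument genuinely yields (2) --- is routine.
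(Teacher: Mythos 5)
Your proof is correct and follows essentially the same route as the paper: the ``if'' direction is the purely formal translation through $D$ using the dictionary between finite generation and finite cogeneration, and the ``only if'' direction realizes $F$ as the image of a morphism ${\rm Hom}_\mathcal{C}(-,Y)\to D{\rm Hom}_\mathcal{C}(C,-)$ in ${\rm fp}(\mathcal{C})$ and invokes closure of ${\rm fp}(\mathcal{C})$ under images. The only difference is that you spell out a detail the paper leaves implicit, namely that $D{\rm Hom}_\mathcal{C}(C,-)$ is itself finitely presented because $\mathcal{C}$ is dualizing.
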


\begin{proof}
We observe that the duality (\ref{equ:du}) preserves the functors that are both finitely generated and finitely cogenerated. Then the ``if" part follows.

For the ``only if" part, we assume that $\mathcal{C}$ is a dualizing $k$-variety and we only prove (1). Indeed, if $F$ is finitely presented, then $DF$ is finitely presented, in particular, $DF$ is finitely generated. Hence $F$ is finitely cogenerated. This yields the direction ``$\Longrightarrow$". Conversely, if $F$ is finitely generated and finitely cogenerated, then $F$ is the image of some morphism $\theta\colon {\rm Hom}_\mathcal{C}(-, X)\rightarrow D{\rm Hom}_\mathcal{C}(C, -)$. The morphism $\theta$ is in the category ${\rm fp}(\mathcal{C})$. Recall that for a dualizing $k$-variety $\mathcal{C}$, the subcategory ${\rm fp}(\mathcal{C})\subseteq (\mathcal{C}^{\rm op}, k\mbox{-mod})$  is closed under images. We infer that $F$ is finitely presented.
\end{proof}

Let $\alpha\colon X\rightarrow Y$ be a morphism in $\mathcal{C}$. Then we define a finitely presented functor $F^\alpha$ by the exact sequence 
$${\rm Hom}_\mathcal{C}(-, X)\stackrel{{\rm Hom}_\mathcal{C}(-, \alpha)}\longrightarrow {\rm Hom}_\mathcal{C}(-, Y)\longrightarrow F^\alpha\rightarrow 0.$$
By the Yoneda Lemma, every finitely presented functor arises in this way.

The following result is contained in \cite[Proposition 5.2]{Kr}. We give a proof for completeness.

\begin{lem}\label{lem:RD}
The morphism $\alpha$ is right determined by an object $C$ if and only if there is a monomorphism $F^\alpha\rightarrow D{\rm Hom}_\mathcal{C}(C', -)$ for some $C'\in {\rm add}\; C$.
\end{lem}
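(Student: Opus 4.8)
The plan is to unwind the definition of right determination into a statement about the functor $F^\alpha$ and then apply the Yoneda-type isomorphism \eqref{equ:Yoneda} to convert a morphism $F^\alpha \to D{\rm Hom}_\mathcal{C}(C',-)$ into data living in ${\rm Hom}_\mathcal{C}(C,-)$-modules, where it can be compared directly with the image-containment condition in the definition. First I would record what $F^\alpha(T)$ is: applying the defining exact sequence at an object $T$ gives $F^\alpha(T)\cong {\rm Hom}_\mathcal{C}(T,Y)/{\rm Im}\,{\rm Hom}_\mathcal{C}(T,\alpha)$, so a class in $F^\alpha(T)$ is represented by a morphism $t\colon T\to Y$, and it is zero precisely when $t$ factors through $\alpha$. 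This reformulates the factorization condition in the definition of right determination entirely in terms of the vanishing of elements of $F^\alpha$.

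Next I would translate the containment hypothesis. For the test object $C$ itself, the condition ${\rm Im}\,{\rm Hom}_\mathcal{C}(C,t)\subseteq {\rm Im}\,{\rm Hom}_\mathcal{C}(C,\alpha)$ says exactly that every composite $t\circ c$ with $c\colon C\to$ (source of $t$)\ldots more precisely that $F^\alpha(C)$ annihilates the relevant elements, i.e.\ the image of ${\rm Hom}_\mathcal{C}(C,t)$ in $F^\alpha(C)$ is zero. So right determination by $C$ is the assertion: whenever the induced map $F^t(C)\to F^\alpha(C)$ (equivalently the image of $t$ in $F^\alpha(C)$) vanishes, the whole element $[t]\in F^\alpha(T)$ vanishes. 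The key observation is that an element of $F^\alpha(T)$ maps to zero in $F^\alpha(C)$ under all the structure maps precisely when it lies in the kernel of the natural evaluation-type map into the functor $D{\rm Hom}_\mathcal{C}(C,-)$. I would make this precise using \eqref{equ:Yoneda}: a morphism $\theta\colon F^\alpha \to D{\rm Hom}_\mathcal{C}(C',-)$ corresponds, via that composite isomorphism (which sends $\theta$ to $\theta_C$), to a $\Gamma(C)$-linear map $F^\alpha(C)\to D{\rm Hom}_\mathcal{C}(C',C)$, and such $\theta$ is a monomorphism iff its kernel vanishes iff the associated pairing separates points of $F^\alpha$.

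With these two translations in hand the proof becomes a matching of conditions. For the ``only if'' direction I would take $C'\in {\rm add}\,C$ large enough (one may take $C'=C$ after replacing $C$ by a direct summand, or simply use $C$) and construct $\theta$ from the universal property: the right-determination property says that the family of evaluation maps $F^\alpha(T)\to \prod_{c\colon C\to T} F^\alpha(C)$ is injective, and because $D{\rm Hom}_\mathcal{C}(C,-)$ is (by \eqref{equ:Yoneda}) the injective cogenerator-type object that co-represents evaluation at $C$, this injectivity upgrades to a monomorphism $F^\alpha\hookrightarrow D{\rm Hom}_\mathcal{C}(C,-)$. For the ``if'' direction I would run the argument backwards: given a monomorphism $\theta\colon F^\alpha\to D{\rm Hom}_\mathcal{C}(C',-)$, naturality of $\theta$ forces an element $[t]\in F^\alpha(T)$ whose image in $F^\alpha(C)$ (hence under $\theta_C$) is zero to be already zero in $F^\alpha(T)$, which is precisely the factorization conclusion; the passage from $C'$ to $C$ is harmless since $C'\in {\rm add}\,C$ gives $D{\rm Hom}_\mathcal{C}(C',-)$ as a summand of a power of $D{\rm Hom}_\mathcal{C}(C,-)$.

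The main obstacle I anticipate is getting the cogenerator bookkeeping exactly right: one must verify that $D{\rm Hom}_\mathcal{C}(C,-)$ genuinely co-represents ``evaluation at $C$'' in the sense needed, so that injectivity of the evaluation family is equivalent to the existence of the monomorphism rather than merely implied by it. This is where \eqref{equ:Yoneda} does the real work, and care is needed with the $\Gamma(C)$-module structures and with the role of ${\rm add}\,C$ (an arbitrary $C'\in {\rm add}\,C$, not just $C=C'$, must be allowed to match the quantifier in the definition of right determination, since determination by $C$ and by any additive generator of ${\rm add}\,C$ coincide). Once the correspondence between the injectivity of evaluation and the monomorphism is established, both implications follow formally.
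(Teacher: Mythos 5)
Your proposal is correct and follows essentially the same route as the paper: both reduce ``$\alpha$ is right determined by $C$'' to the statement that an element of $F^\alpha(T)$ killed by every structure map $F^\alpha(T)\to F^\alpha(C)$ must vanish, and both convert this into a monomorphism into $D{\rm Hom}_\mathcal{C}(C',-)$ via the isomorphism (\ref{equ:Yoneda}) together with the fact that $D{\rm Hom}_\mathcal{C}(C,C)$ is an injective cogenerator over $\Gamma(C)$. The one parenthetical to delete is the suggestion that one may ``simply use $C$'' for $C'$ in the only-if direction: embedding the finitely generated $\Gamma(C)$-module $F^\alpha(C)$ into an injective cogenerator in general needs several copies of $D{\rm Hom}_\mathcal{C}(C,C)$, i.e.\ $C'=C^{n}$, which is precisely why the lemma quantifies over $C'\in{\rm add}\;C$.
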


\begin{proof}
For the ``only if" part, we assume that $\alpha\colon X\rightarrow Y$ is right determined by $C$. Take an exact sequence of $\Gamma(C)$-modules for some $C'\in {\rm add}\; C$
$${\rm Hom}_\mathcal{C}(C, X)\stackrel{{\rm Hom}_\mathcal{C}(C, \alpha)}\longrightarrow {\rm Hom}_\mathcal{C}(C, Y)\stackrel{\theta_C}\longrightarrow D{\rm Hom}_\mathcal{C}(C', C).$$
Indeed, we may take an injective map ${\rm Cok}\; {\rm Hom}_\mathcal{C}(C, \alpha)\hookrightarrow D{\rm Hom}_\mathcal{C}(C', C)$ for some $C'\in {\rm add}\; C$; here, we use the fact that $D{\rm Hom}_\mathcal{C}(C, C)$ is an injective cogenerator as a $\Gamma(C)$-module. By the isomorphism (\ref{equ:Yoneda}), the map $\theta_C$ induces a morphism $\theta\colon {\rm Hom}_\mathcal{C}(-, Y)\rightarrow D{\rm Hom}_\mathcal{C}(C', -)$.
We claim that the following sequence of functors is exact, which yields the required monomorphism.
\begin{align}\label{equ:FE}
{\rm Hom}_\mathcal{C}(-, X) \stackrel{{\rm Hom}_\mathcal{C}(-, \alpha)}\longrightarrow {\rm Hom}_\mathcal{C}(-, Y) \stackrel{\theta}\longrightarrow D{\rm Hom}_\mathcal{C}(C', -)
\end{align}
The composite  is zero by the isomorphism (\ref{equ:Yoneda}). Let $t\colon T\rightarrow Y$ be any morphism in ${\rm Ker}\; \theta_T$. For any morphism $\psi\colon C\rightarrow T$, the morphism $t\circ \psi$ lies in ${\rm Ker}\; \theta_C$ by the naturalness of $\theta$, and thus in ${\rm Im}\; {\rm Hom}_\mathcal{C}(C, \alpha)$. In other words, ${\rm Im}\; {\rm Hom}_\mathcal{C}(C, t)\subseteq {\rm Im}\; {\rm Hom}_\mathcal{C}(C, \alpha)$. Since $\alpha$ is right determined by $C$, we infer that $t$ factors through $\alpha$. This proves that the above sequence is exact.

For the ``if" part,  we  may assume that we have an exact sequence as (\ref{equ:FE}). Take any morphism $t\colon T\rightarrow Y$ such that ${\rm Im}\; {\rm Hom}_\mathcal{C}(C, t)\subseteq {\rm Im}\; {\rm Hom}_\mathcal{C}(C, \alpha)$. Hence,  $\theta_C\circ {\rm Hom}_\mathcal{C}(C, t)=0$. By the isomorphism (\ref{equ:Yoneda}), we have $\theta \circ {\rm Hom}_\mathcal{C}(-, t)=0$. Recall that ${\rm Hom}_\mathcal{C}(C, -)$ is a projective object in $(\mathcal{C}^{\rm op}, k\mbox{-mod})$. Then the exact sequence (\ref{equ:FE}) yields that ${\rm Hom}_\mathcal{C}(-, t)$ factors through ${\rm Hom}_\mathcal{C}(-, \alpha)$. By the  Yoneda Lemma, $t$ factors through $\alpha$.  Then we are done.
\end{proof}

Let $Y$ be an object. Consider a pair $(C, H)$ with $C$ an object and $H\subseteq {\rm Hom}_\mathcal{C}(C, Y)$ a $\Gamma(C)$-submodule. Recall that $D{\rm Hom}_\mathcal{C}(C, C)$ is an injective cogenerator as a $\Gamma(C)$-module.
Take an embedding of $\Gamma(C)$-modules $$ {\rm Hom}_\mathcal{C}(C, Y)/H\hookrightarrow D{\rm Hom}_\mathcal{C}(C', C)$$  for some $C'\in {\rm add}\; C$. This gives rise to  a map $\theta_C\colon {\rm Hom}_\mathcal{C}(C, Y)\rightarrow D{\rm Hom}_\mathcal{C}(C', C)$, which corresponds via (\ref{equ:Yoneda}) to a morphism $\theta \colon {\rm Hom}_\mathcal{C}(-, Y)
\rightarrow D{\rm Hom}_\mathcal{C}(C', -)$. Denote its image by $F^{(C, H)}$; it is a finitely generated and finitely cogenerated functor. Indeed, all functors in $(\mathcal{C}^{\rm op}, k\mbox{-mod})$ that are finitely generated and finitely cogenerated  arise in this way.

\begin{lem}\label{lem:CH}
The pair $(C, H)$ is right represented by some morphism if and only if the functor $F^{(C, H)}$ is finite presented.
\end{lem}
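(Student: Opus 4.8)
The plan is to reduce the statement to Lemma \ref{lem:RD} by identifying $F^{(C,H)}$ with a cokernel functor $F^\alpha$. Writing $\pi\colon {\rm Hom}_\mathcal{C}(-,Y)\to F^{(C,H)}$ for the canonical epimorphism onto the image of $\theta$, one has ${\rm Ker}\,\pi={\rm Ker}\,\theta$, and since ${\rm Ker}\,\theta_C=H$ by the very construction of $\theta$, the functor $F^{(C,H)}$ is a quotient of ${\rm Hom}_\mathcal{C}(-,Y)$ whose $C$-component is ${\rm Hom}_\mathcal{C}(C,Y)/H$. The guiding observation is that $F^{(C,H)}$ is finitely presented exactly when ${\rm Ker}\,\theta$ is the image of a map induced by a single morphism $\alpha\colon X\to Y$, in which case $F^{(C,H)}\cong F^\alpha$ and Lemma \ref{lem:RD} applies.

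For the ``only if'' part I would start from a morphism $\alpha\colon X\to Y$ right representing $(C,H)$, so that $\alpha$ is right determined by $C$ and $H={\rm Im}\,{\rm Hom}_\mathcal{C}(C,\alpha)$, and prove that ${\rm Ker}\,\theta={\rm Im}\,{\rm Hom}_\mathcal{C}(-,\alpha)$. The inclusion ``$\supseteq$'' follows because the $C$-component $\theta_C\circ{\rm Hom}_\mathcal{C}(C,\alpha)$ vanishes (as ${\rm Im}\,{\rm Hom}_\mathcal{C}(C,\alpha)=H={\rm Ker}\,\theta_C$), whence $\theta\circ{\rm Hom}_\mathcal{C}(-,\alpha)=0$ by the injectivity built into (\ref{equ:Yoneda}). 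For ``$\subseteq$'' I would take $t\in({\rm Ker}\,\theta)(T)$ and argue as in the proof of Lemma \ref{lem:RD} that $t\circ\psi\in{\rm Ker}\,\theta_C=H$ for every $\psi\colon C\to T$, so that ${\rm Im}\,{\rm Hom}_\mathcal{C}(C,t)\subseteq{\rm Im}\,{\rm Hom}_\mathcal{C}(C,\alpha)$; right-determinedness of $\alpha$ by $C$ then forces $t$ to factor through $\alpha$, i.e. $t\in{\rm Im}\,{\rm Hom}_\mathcal{C}(T,\alpha)$. With ${\rm Ker}\,\theta={\rm Im}\,{\rm Hom}_\mathcal{C}(-,\alpha)$ in hand, $F^{(C,H)}={\rm Im}\,\theta\cong{\rm Cok}\,{\rm Hom}_\mathcal{C}(-,\alpha)=F^\alpha$ is finitely presented by definition.

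For the ``if'' part I would assume $F^{(C,H)}$ is finitely presented. Since ${\rm Hom}_\mathcal{C}(-,Y)$ is projective and finitely generated, the kernel of $\pi$, equivalently ${\rm Ker}\,\theta$, is then finitely generated; this is the standard fact that for a finitely presented object covered by a finitely generated projective the syzygy is finitely generated (via Schanuel's lemma). Choosing an epimorphism ${\rm Hom}_\mathcal{C}(-,X)\twoheadrightarrow{\rm Ker}\,\theta$ and composing with the inclusion into ${\rm Hom}_\mathcal{C}(-,Y)$, the Yoneda Lemma identifies it with ${\rm Hom}_\mathcal{C}(-,\alpha)$ for some $\alpha\colon X\to Y$, and by construction ${\rm Im}\,{\rm Hom}_\mathcal{C}(-,\alpha)={\rm Ker}\,\theta$. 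This yields precisely an exact sequence of the form (\ref{equ:FE}) with $C'\in{\rm add}\,C$, so Lemma \ref{lem:RD} shows that $\alpha$ is right determined by $C$; evaluating ${\rm Ker}\,\theta={\rm Im}\,{\rm Hom}_\mathcal{C}(-,\alpha)$ at $C$ gives $H={\rm Ker}\,\theta_C={\rm Im}\,{\rm Hom}_\mathcal{C}(C,\alpha)$, so that $(C,H)$ is right represented by $\alpha$. I expect the point requiring most care to be the inclusion ${\rm Ker}\,\theta\subseteq{\rm Im}\,{\rm Hom}_\mathcal{C}(-,\alpha)$ in the ``only if'' part, since it is exactly there that right-determinedness is invoked and where one must check that the pointwise factorizations assemble into a single factorization; pleasantly, both directions turn out to be insensitive to the chosen embedding ${\rm Hom}_\mathcal{C}(C,Y)/H\hookrightarrow D{\rm Hom}_\mathcal{C}(C',C)$, so no separate well-definedness argument for $F^{(C,H)}$ is needed.
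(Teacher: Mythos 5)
Your proof is correct and takes essentially the same route as the paper's: both directions identify $F^{(C,H)}$ with $F^\alpha$ and reduce to Lemma \ref{lem:RD}, with your ``only if'' part simply spelling out in detail the exactness of (\ref{equ:FE}) that the paper cites from the proof of Lemma \ref{lem:RD}, and your ``if'' part matching the paper's Schanuel-type argument verbatim.
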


\begin{proof}
For the ``only if" part, assume that $(C, H)$ is right represented by a morphism $\alpha\colon X\rightarrow Y$; in particular, $H={\rm Im}\; {\rm Hom}_\mathcal{C}(C, \alpha)$. By the proof of  Lemma \ref{lem:RD}, the functor $F^\alpha$ is the image of the morphism $\theta\colon {\rm Hom}_\mathcal{C}(-, Y) \rightarrow D{\rm Hom}_\mathcal{C}(C', -)$. It follows that $F^{(C, H)}=F^\alpha$. In particular, it is finitely presented.

For the ``if" part, assume that $F^{(C, H)}$ is finitely presented. The kernel of the epimorphism ${\rm Hom}_\mathcal{C}(-, Y)\rightarrow F^{(C, H)}$ is finitely generated. Hence there exists a map $\alpha\colon X\rightarrow Y$ such that the following sequence is exact
\begin{align}
{\rm Hom}_\mathcal{C}(-, X) \stackrel{{\rm Hom}_\mathcal{C}(-, \alpha)}\longrightarrow {\rm Hom}_\mathcal{C}(-, Y) \stackrel{\theta}\longrightarrow D{\rm Hom}_\mathcal{C}(C', -).
\end{align}
Hence, ${\rm Im}\; {\rm Hom}_\mathcal{C}(C, \alpha)=H$ and $F^{(C, H)}\simeq F^\alpha$. By Lemma \ref{lem:RD} the map $\alpha$ is right determined by $C$. Then the pair $(C, H)$ is right represented by $\alpha$.
\end{proof}

\begin{cor}\label{cor:RC}
Let $Y$ be an object in $\mathcal{C}$. Then the following statements are equivalent:
\begin{itemize}
\item[(1)] the object $Y$ is right classified;
\item[(2)] for any quotient functor $F$ of ${\rm Hom}_\mathcal{C}(-, Y)$, $F$ is finitely presented if and only if $F$ is finitely cogenerated.
\end{itemize}
\end{cor}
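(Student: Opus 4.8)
The plan is to read the corollary off from Lemmas \ref{lem:RD} and \ref{lem:CH} by matching the two halves of condition (2) against (RC1) and (RC2) separately. Throughout I work inside the abelian functor category $(\mathcal{C}^{\rm op}, k\mbox{-mod})$, and I use that a functor is a quotient of ${\rm Hom}_\mathcal{C}(-,Y)$ precisely when it admits an epimorphism ${\rm Hom}_\mathcal{C}(-,Y)\to F$; in particular every such $F$ is automatically finitely generated. Hence for quotients of ${\rm Hom}_\mathcal{C}(-,Y)$ the condition ``finitely generated and finitely cogenerated'' collapses to ``finitely cogenerated'', which is why (2) can be phrased purely in terms of finitely cogenerated versus finitely presented.

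First I would treat (RC1). I claim the assignment $\alpha\mapsto F^\alpha$ sends the morphisms ending at $Y$ onto exactly the finitely presented quotients of ${\rm Hom}_\mathcal{C}(-,Y)$. That each $F^\alpha$ is such a quotient is clear from its defining presentation. Conversely, given a finitely presented quotient $F$ with epimorphism $\pi\colon {\rm Hom}_\mathcal{C}(-,Y)\to F$, the kernel of $\pi$ is finitely generated (the same fact invoked in the proof of Lemma \ref{lem:CH}), so ${\rm Ker}\,\pi$ is the image of some ${\rm Hom}_\mathcal{C}(-,X)\to {\rm Hom}_\mathcal{C}(-,Y)$, which by the Yoneda Lemma is ${\rm Hom}_\mathcal{C}(-,\alpha)$ for some $\alpha\colon X\to Y$; then $F\cong F^\alpha$. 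By Lemma \ref{lem:RD}, $F^\alpha$ is finitely cogenerated if and only if $\alpha$ is right determined by some object. Combining these, (RC1) holds if and only if every finitely presented quotient of ${\rm Hom}_\mathcal{C}(-,Y)$ is finitely cogenerated, that is, the forward implication in (2).

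Next I would treat (RC2) in the same spirit, via the assignment $(C,H)\mapsto F^{(C,H)}$ for pairs with $H\subseteq {\rm Hom}_\mathcal{C}(C,Y)$. Each $F^{(C,H)}$ is a finitely cogenerated quotient of ${\rm Hom}_\mathcal{C}(-,Y)$ by construction, and conversely any finitely cogenerated quotient $F$ arises this way: compose the epimorphism $\pi\colon {\rm Hom}_\mathcal{C}(-,Y)\to F$ with a monomorphism $F\hookrightarrow D{\rm Hom}_\mathcal{C}(C,-)$, transport the result through the isomorphism (\ref{equ:Yoneda}) to a map $\theta_C\colon {\rm Hom}_\mathcal{C}(C,Y)\to D{\rm Hom}_\mathcal{C}(C,C)$, and set $H={\rm Ker}\,\theta_C$; since the monomorphism is injective the image of the composite is $F$, so $F\cong F^{(C,H)}$. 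By Lemma \ref{lem:CH}, $F^{(C,H)}$ is finitely presented if and only if $(C,H)$ is right represented by some morphism. Hence (RC2) holds if and only if every finitely cogenerated quotient of ${\rm Hom}_\mathcal{C}(-,Y)$ is finitely presented, that is, the reverse implication in (2). Since $Y$ is right classified exactly when (RC1) and (RC2) both hold, assembling the two equivalences gives the corollary.

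The step that needs the most care, and which I regard as the main (if modest) obstacle, is the surjectivity of these two assignments with the codomain pinned to our fixed $Y$: the remarks preceding the corollary only guarantee that a finitely presented functor is some $F^\beta$ and that a finitely generated and finitely cogenerated functor is some $F^{(C,H)}$, with no control over the target object. The extra input needed is that the kernel of an epimorphism from the finitely generated projective ${\rm Hom}_\mathcal{C}(-,Y)$ onto a finitely presented functor is again finitely generated (a Schanuel-type argument in the ambient abelian category), together with the transport through (\ref{equ:Yoneda}) that recovers the data $(C,H)$ over $Y$. Once these realizations are in place, the corollary is a bookkeeping combination of Lemmas \ref{lem:RD} and \ref{lem:CH}.
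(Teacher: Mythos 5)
Your proof is correct and follows the same route as the paper: the paper's own argument is exactly the observation that the finitely presented quotients of ${\rm Hom}_\mathcal{C}(-, Y)$ are the $F^\alpha$ and the finitely cogenerated ones are the $F^{(C,H)}$, followed by an appeal to Lemmas \ref{lem:RD} and \ref{lem:CH}. You have merely filled in the realization details (the finitely generated kernel and the transport through (\ref{equ:Yoneda})) that the paper compresses into a single ``Observe that'' sentence, and these details are accurate.
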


\begin{proof}
 Observe that the quotient functor $F$ is finitely presented if and only if $F=F^\alpha$ for some morphism $\alpha\colon X\rightarrow Y$, and that $F$ is finitely cogenerated if and only if $F=F^{(C, H)}$ for a pair $(C, H)$. Then the result follows from Lemmas \ref{lem:RD} and \ref{lem:CH}.
\end{proof}

The following is an immediate consequence of the above result.

\begin{cor}\label{cor:RD}
Let $\mathcal{C}$ be as above. Then the following statements are equivalent:
\begin{itemize}
\item[(1)] the additive category $\mathcal{C}$ has right determined morphisms;
\item[(2)] for any functor $F$ in $(\mathcal{C}^{\rm op}, k\mbox{-}{\rm mod})$, $F$ is finitely presented if and only if $F$ is finitely generated and finitely cogenerated. \hfill $\square$
\end{itemize}
\end{cor}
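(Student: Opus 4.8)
The plan is to obtain the statement by quantifying Corollary \ref{cor:RC} over all objects $Y$ and then translating the resulting condition into the language of arbitrary functors. By definition, $\mathcal{C}$ has right determined morphisms if and only if every object $Y$ is right classified; applying Corollary \ref{cor:RC} to each $Y$, statement $(1)$ is equivalent to the assertion that for every object $Y$ and every quotient functor $F$ of ${\rm Hom}_\mathcal{C}(-, Y)$, the functor $F$ is finitely presented if and only if it is finitely cogenerated. So the first thing I would do is record this reformulation of $(1)$.

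The bridge to statement $(2)$ is the observation that a functor $F\in (\mathcal{C}^{\rm op}, k\mbox{-mod})$ is finitely generated precisely when it is a quotient of some representable ${\rm Hom}_\mathcal{C}(-, Y)$; this is immediate from the definition of finite generation. Thus the condition obtained above is just a reformulation of: for every finitely generated functor $F$, $F$ is finitely presented if and only if $F$ is finitely cogenerated. I would then show this restricted statement is equivalent to $(2)$, using the elementary fact, visible from the defining exact sequence, that every finitely presented functor is finitely generated. For one direction, if $(2)$ holds and $F$ is finitely generated, then the clause ``finitely generated and finitely cogenerated'' reduces to ``finitely cogenerated'', so $(2)$ yields the restricted statement verbatim. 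For the converse, given the restricted statement and an arbitrary $F$: if $F$ is finitely presented then it is finitely generated, whence the restricted statement shows it is finitely cogenerated; and if $F$ is finitely generated and finitely cogenerated, the restricted statement shows it is finitely presented. This establishes $(2)$ for all $F$.

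The only delicate point is this last bookkeeping step. One must invoke ``finitely presented $\Rightarrow$ finitely generated'' exactly so that the ``only if'' part of $(2)$ is deduced for an arbitrary $F$, rather than merely for a finitely generated $F$; without that automatic implication the passage from the restricted statement back to $(2)$ would be circular. Everything else is an unwinding of definitions together with the identification of finitely generated functors with quotients of representables, which is precisely why the corollary follows immediately from Corollary \ref{cor:RC}.
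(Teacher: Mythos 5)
Your proposal is correct and is exactly the unwinding the paper intends when it calls the corollary ``an immediate consequence'' of Corollary \ref{cor:RC}: quantify over all $Y$, identify finitely generated functors with quotients of representables, and use that finitely presented implies finitely generated to pass between the restricted and unrestricted forms of statement (2). The paper omits these details entirely, so your write-up simply makes the intended argument explicit.
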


\begin{exm}
{\rm Let $\mathcal{C}$ be a Hom-finite additive $k$-linear category which is skeletally small and has split idempotents. Hence, the category $\mathcal{C}$ is Krulll-Schmidt. Denote by ${\rm ind}\; \mathcal{C}$ the set of iso-classes of indecomposable objects in $\mathcal{C}$. We assume that for each object $Y$, there are only finitely many $X\in {\rm ind}\; \mathcal{C}$ such that ${\rm Hom}_\mathcal{C}(X, Y)\neq 0$, and that there exists an object $C_0$ such that ${\rm Hom}_\mathcal{C}(C_0, X)\neq 0$ for infinitely many $X\in {\rm ind}\; \mathcal{C}$. For example, the category of preprojective modules over a tame hereditary algebra satisfies this condition.

In this case, every finitely generated functor $F$ in $(\mathcal{C}^{\rm op}, k\mbox{-}{\rm mod})$ has finite length, and it follows that $F$ is finitely presented and finitely cogenerated. However,  every  finitely cogenerated functor in  $(\mathcal{C}, k\mbox{-}{\rm mod})$ has finite length, and thus the functor ${\rm Hom}_\mathcal{C}(C_0, -)$ is not finitely cogenerated. It follows from Corollary \ref{cor:RD} that $\mathcal{C}$ has right determined morphism, but does not have left determined morphisms. Indeed, by the dual of Corollary \ref{cor:RC},  an object $C$ is left classified if and only if there are only finitely many $X\in {\rm ind}\; \mathcal{C}$ such that ${\rm Hom}_\mathcal{C}(C, X)\neq 0$.}
\end{exm}

\section{Abelian categories having Serre duality}

Let $\mathcal{C}$ be a Hom-finite $k$-linear abelian category. The category $\mathcal{C}$ is said to \emph{have Serre duality} provided that there exists a $k$-linear auto-equivalence $\tau \colon \mathcal{C}\rightarrow \mathcal{C}$ with a functorial isomorphism
\begin{align}
D {\rm Ex}_\mathcal{C}^1(X, Y)\stackrel{\sim}\longrightarrow {\rm Hom}_\mathcal{C}(Y, \tau(X))
\end{align}
for any objects $X, Y$ in $\mathcal{C}$. The functor $\tau$ is called the \emph{Auslander-Reiten translation} of $\mathcal{C}$.

The following notion is modified from Definition \ref{defn:1}.

 \begin{defn}\label{defn:REC}
An object $Y$ in $\mathcal{C}$ is \emph{right epi-classified} provided that  the following hold:
\begin{enumerate}
 \item[(REC1)] each epimorphism $\alpha\colon X\rightarrow Y$ ending at $Y$ is right determined by some object $C$;
 \item[(REC2)] for any object $C$ and any  $\Gamma(C)$-submodule $H$ of ${\rm Hom}_\mathcal{C}(C, Y)$,  the pair $(C, H)$ is  right represented by some epimorphism $\alpha\colon X\rightarrow Y$.
 \end{enumerate}
If each object in $\mathcal{C}$ is right epi-classified, the abelian category $\mathcal{C}$ is said to \emph{have right determined epimorphisms}. \hfill $\square$
 \end{defn}

We observe  the following fact.

\begin{lem}\label{lem:REC}
Let $Y$ be right epi-classified. Then a morphism ending at $Y$ is right determined by some object if and only if it is an epimorphism.
\end{lem}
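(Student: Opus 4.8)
The plan is to split the biconditional into its two implications, each governed by one of the defining conditions of being right epi-classified. The ``if'' direction is immediate: if a morphism $\alpha\colon X\rightarrow Y$ ending at $Y$ is an epimorphism, then (REC1) guarantees that $\alpha$ is right determined by some object. So all the content lies in the ``only if'' direction, and that is where I would concentrate.

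For the ``only if'' direction I would begin with a morphism $\alpha\colon X\rightarrow Y$ that is right determined by some object $C$, and aim to produce an epimorphism ending at $Y$ through which $\alpha$ can be compared. Setting $H={\rm Im}\;{\rm Hom}_\mathcal{C}(C,\alpha)\subseteq {\rm Hom}_\mathcal{C}(C,Y)$, I would invoke (REC2) to obtain an epimorphism $\beta\colon X'\rightarrow Y$ right representing the pair $(C,H)$. By construction $\beta$ is itself right determined by $C$ and satisfies ${\rm Im}\;{\rm Hom}_\mathcal{C}(C,\beta)=H={\rm Im}\;{\rm Hom}_\mathcal{C}(C,\alpha)$.

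The decisive step is then to compare $\alpha$ and $\beta$. Since both are right determined by $C$ and have equal image under ${\rm Hom}_\mathcal{C}(C,-)$, Lemma \ref{lem:REHom} tells us they are right equivalent; in particular $\beta$ factors through $\alpha$, say $\beta=\alpha\circ\gamma$ for some $\gamma\colon X'\rightarrow X$. As $\beta$ is an epimorphism its image equals $Y$, and since $\beta$ factors through $\alpha$ this image is contained in ${\rm Im}\;\alpha$; hence ${\rm Im}\;\alpha=Y$ and $\alpha$ is an epimorphism. There is no serious obstacle once these tools are aligned; the only point requiring care is the bookkeeping of which morphism factors through which. Right equivalence is symmetric and supplies both factorizations, but only the one exhibiting $\beta$ as a composite whose left factor is $\alpha$ lets us transfer surjectivity from $\beta$ to $\alpha$. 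The essential mechanism is that Lemma \ref{lem:REHom} converts the coincidence of images into right equivalence, after which the abelian structure of $\mathcal{C}$ finishes the argument.
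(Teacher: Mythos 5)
Your proof is correct and follows essentially the same route as the paper: apply (REC2) to the pair $(C,\,{\rm Im}\;{\rm Hom}_\mathcal{C}(C,\alpha))$ to obtain an epimorphism right representing it, invoke Lemma \ref{lem:REHom} to conclude right equivalence, and transfer the epimorphism property across the right equivalence class. The only cosmetic difference is that the paper cites the general fact that right equivalent morphisms are simultaneously epic or not, whereas you extract the one factorization $\beta=\alpha\circ\gamma$ actually needed; both are fine.
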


Consequently, if the abelian category $\mathcal{C}$ has right determined epimorphisms, a morphism is right determined by some object if and only if it is an epimorphism.

\begin{proof}
Recall that for two right equivalent maps $\alpha_1\colon X \rightarrow Y$ and $\alpha_2\colon X_2\rightarrow Y$, $\alpha_1$ is epic if and only if so is $\alpha_2$.  For the ``only if" part of this lemma, let $\alpha\colon X\rightarrow Y$ be a morphism which is right determined by some object $C$. By (REC2) the pair $(C, {\rm Im}\; {\rm Hom}_\mathcal{C}(C, \alpha))$ is right represented by an epimorphism $\alpha'\colon X'\rightarrow Y$. Lemma \ref{lem:REHom} implies that $\alpha$ and $\alpha'$ are right equivalent. It follows that $\alpha$ is epic.
\end{proof}

We denote by $[\longrightarrow Y\rangle_{\rm epi}$ the subset of $[\longrightarrow Y\rangle$ formed by epimorphisms. As in Introduction, the object $Y$ is right epi-classified implies that $^C[\longrightarrow Y\rangle={^C[\longrightarrow Y\rangle_{\rm epi}}$ and $[\longrightarrow Y\rangle_{\rm epi}=\bigcup {^C[\longrightarrow Y\rangle_{\rm epi}}$ where $C$ runs over all objects in $\mathcal{C}$, and the Auslander bijection (\ref{equ:2}) at $Y$.

Following \cite{LZ}, a morphism $f\colon X\rightarrow Y$ is \emph{projectively trivial} if ${\rm Ext}_\mathcal{C}^1(f, -)=0$. For any objects $X$ and $Y$, denote by $\mathcal{P}(X, Y)$ the subset of ${\rm Hom}_\mathcal{C}(X, Y)$ formed by projectively trivial morphisms. This gives rise to an ideal $\mathcal{P}$ of $\mathcal{C}$ and the corresponding factor category is denoted by $\underline{\mathcal{C}}$. Dually, one defines \emph{injectively trivial} morphisms and the factor category $\overline{\mathcal{C}}$. For almost split sequences, we refer to \cite{ARS}.

\begin{prop} \label{prop:REC}
Let $\mathcal{C}$ be a Hom-finite $k$-linear abelian category and let $Y$ be right epi-classified. Then we have the following statements:
\begin{enumerate}
\item[(1)] if $Y$ is indecomposable, then there is an almost split sequence $0\rightarrow K\rightarrow X\rightarrow Y\rightarrow 0$ for some objects $K$ and $X$;
\item[(2)] $\mathcal{P}(C, Y)=0$ for any object $C$.
\end{enumerate}
\end{prop}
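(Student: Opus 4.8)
The plan is to treat the two statements separately; in each case the idea is to feed a carefully chosen $\Gamma(C)$-submodule into (REC2) and then read off the conclusion from the defining property of right determined morphisms.

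For (2), I would apply (REC2) to the pair $(C,0)$, obtaining an epimorphism $\alpha\colon X\to Y$ that right represents it; thus $\alpha$ is epic and ${\rm Im}\,{\rm Hom}_\mathcal{C}(C,\alpha)=0$, so ${\rm Hom}_\mathcal{C}(C,\alpha)=0$. Being an epimorphism, $\alpha$ fits into a short exact sequence $0\to K\to X\xrightarrow{\alpha} Y\to 0$, representing a class $\xi\in{\rm Ext}_\mathcal{C}^1(Y,K)$. Now let $f\colon C\to Y$ be projectively trivial. Pulling the sequence back along $f$ gives a sequence $0\to K\to E\to C\to 0$ representing ${\rm Ext}_\mathcal{C}^1(f,K)(\xi)$, which vanishes since ${\rm Ext}_\mathcal{C}^1(f,-)=0$. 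Hence the pullback splits, and a splitting yields $g\colon C\to X$ with $\alpha\circ g=f$. Therefore $f\in{\rm Im}\,{\rm Hom}_\mathcal{C}(C,\alpha)=0$, i.e.\ $f=0$, proving $\mathcal{P}(C,Y)=0$.

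For (1), I would first note that, $Y$ being indecomposable in an abelian (hence idempotent-complete) Hom-finite $k$-category, the ring ${\rm End}_\mathcal{C}(Y)$ is local artinian; let ${\rm rad}(Y,Y)$ denote its radical, the unique maximal $\Gamma(Y)$-submodule of ${\rm Hom}_\mathcal{C}(Y,Y)$, consisting of the non-isomorphisms. Applying (REC2) to $(Y,{\rm rad}(Y,Y))$ produces an epimorphism $\alpha\colon X\to Y$, right determined by $Y$, with ${\rm Im}\,{\rm Hom}_\mathcal{C}(Y,\alpha)={\rm rad}(Y,Y)$. Since ${\rm id}_Y\notin{\rm rad}(Y,Y)$, the map $\alpha$ is not a split epimorphism. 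I then claim $\alpha$ is right almost split: if $t\colon T\to Y$ is not a split epimorphism, then no composite $t\circ\psi$ with $\psi\colon Y\to T$ is an isomorphism, so by locality ${\rm Im}\,{\rm Hom}_\mathcal{C}(Y,t)\subseteq{\rm rad}(Y,Y)={\rm Im}\,{\rm Hom}_\mathcal{C}(Y,\alpha)$, and right-determinedness by $Y$ forces $t$ to factor through $\alpha$.

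It remains to pass from this right almost split epimorphism to an almost split sequence, which I expect to be the main obstacle. Here I would invoke the Krull--Schmidt property of $\mathcal{C}$ (guaranteed by Hom-finiteness over the artinian ring $k$ together with idempotent-completeness) to split off a minimal right almost split summand $\alpha_0\colon X_0\to Y$ of $\alpha$; since ${\rm Im}\,\alpha_0={\rm Im}\,\alpha=Y$, the map $\alpha_0$ is still an epimorphism, and the standard correspondence between minimal right almost split epimorphisms onto an indecomposable object and almost split sequences then yields the desired $0\to K\to X_0\xrightarrow{\alpha_0} Y\to 0$. The delicate points are justifying the minimal reduction and ensuring that the epimorphism property---exactly the feature supplied by (REC2), and the one that would fail for a projective $Y$---is preserved along the way.
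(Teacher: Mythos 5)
Your proposal is correct and follows essentially the same route as the paper: part (1) applies (REC2) to the pair $(Y,\operatorname{rad}\operatorname{End}_\mathcal{C}(Y))$ and reduces to a (minimal) right almost split epimorphism, where the paper simply cites \cite[Proposition V.1.14]{ARS} for the steps you carry out by hand, and part (2) applies (REC2) to $(C,0)$ and uses that a projectively trivial $f$ factors through any epimorphism onto $Y$ (your pullback argument is exactly the unwinding of that fact). No gaps; you have merely made explicit the minimal-reduction and almost-split bookkeeping that the paper delegates to the reference.
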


In particular, if the abelian category $\mathcal{C}$ has right determined epimorphisms, we have $\mathcal{C}=\underline{\mathcal{C}}$.

\begin{proof}
Denote by ${\rm rad}\; {\rm End}_\mathcal{C}(Y)$ the Jacobson radical of ${\rm End}_\mathcal{C}(Y)$.  We apply (REC2) to the pair $(Y, {\rm rad}\; {\rm End}_\mathcal{C}(Y))$, and we assume that it is right represented by an epimorphism $\alpha\colon X\rightarrow Y$; moreover, we may assume that $\alpha$ is right minimal. It follows from \cite[Proposition V.1.14]{ARS} that $0\rightarrow {\rm Ker}\; \alpha \rightarrow X\rightarrow Y\rightarrow 0$ is an almost split sequence.

For (2), let $f\colon C\rightarrow Y$ be a projectively trivial morphism. Then from the definition, one infers that $f$ factors through any epimorphism $\alpha\colon X\rightarrow Y$. In particular, by (REC2) we may take $\alpha$ to be the epimorphism that is right determined by $C$ with ${\rm Im}\; {\rm Hom}_\mathcal{C}(C, \alpha)=0$. This implies that $f=0$.
\end{proof}

The dual  of Definition \ref{defn:REC} is as follows: an object $Y$ in $\mathcal{C}$ is \emph{left mono-classified} if it is right epi-classified in the opposite category $\mathcal{C}^{\rm op}$; the abelian category $\mathcal{C}$ has \emph{left determined monomorphisms} if each object is left mono-classified.

The following result is an abelian analogue of \cite[Theorem 4.2]{Kr}. The proof relies on the results in \cite{LZ}.

\begin{thm}\label{thm:1}
Let $\mathcal{C}$ be a Hom-finite $k$-linear abelian category. Then $\mathcal{C}$ has Serre duality if and only if $\mathcal{C}$ has right determined epimorphisms and left determined monomorphisms.
\end{thm}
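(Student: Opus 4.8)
The plan is to prove both directions by translating the "Serre duality" condition and the "determined (epi/mono)morphisms" condition into equivalent statements about finiteness properties of functors, then matching them. By Corollary \ref{cor:RC} and its dual, together with Lemma \ref{lem:REC}, having right determined epimorphisms is essentially the assertion that every finitely cogenerated quotient of a representable functor ${\rm Hom}_\mathcal{C}(-,Y)$ is finitely presented, and symmetrically for the monomorphism side. So the theorem should reduce to: the existence of the Serre functor $\tau$ is equivalent to a functorial finiteness statement. The natural bridge is the theory developed in \cite{LZ}: Serre duality in a Hom-finite abelian category is known to be equivalent to the category having almost split sequences together with the vanishing of the projectively trivial and injectively trivial ideals (i.e.\ $\mathcal{C}=\underline{\mathcal{C}}=\overline{\mathcal{C}}$), and to a corresponding representability of certain (co)homological functors.

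For the "only if" direction I would assume Serre duality and produce right determined epimorphisms (the monomorphism side being dual via $\tau$). The key computational input is the Serre duality isomorphism $D{\rm Ext}^1_\mathcal{C}(X,Y)\cong {\rm Hom}_\mathcal{C}(Y,\tau X)$. I would first verify (REC1): given an epimorphism $\alpha\colon X\to Y$ with kernel $K$, I expect $\alpha$ to be right determined by $\tau K$ (or an object in its additive closure), exactly as in the module case, where a morphism with kernel $K$ is right determined by the object $\tau K$ together with a projective. Concretely, using the functor $F^\alpha$ from the excerpt, I would show via Serre duality that $F^\alpha$ embeds into $D{\rm Hom}_\mathcal{C}(\tau K,-)$, which by Lemma \ref{lem:RD} gives right determinacy by $\tau K$. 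For (REC2), given a pair $(C,H)$, I would realize the finitely cogenerated functor $F^{(C,H)}$ and show it is finitely presented, hence by Lemma \ref{lem:CH} the pair is right represented; the epimorphism requirement is arranged because in an abelian category one can always take the representing morphism to be surjective onto $Y$ (replacing $\alpha$ by its extension by the inclusion of a complement of its image is unnecessary once we know $H\subseteq {\rm Hom}_\mathcal{C}(C,Y)$ and use that representable quotients correspond to epimorphisms).

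For the "if" direction I would assume $\mathcal{C}$ has right determined epimorphisms and left determined monomorphisms and construct $\tau$. By Proposition \ref{prop:REC}, the right-epi hypothesis already yields almost split sequences ending at each indecomposable $Y$ and the vanishing $\mathcal{P}(C,Y)=0$, i.e.\ $\mathcal{C}=\underline{\mathcal{C}}$; dually the left-mono hypothesis yields almost split sequences starting at each indecomposable and $\mathcal{C}=\overline{\mathcal{C}}$. The existence of almost split sequences on both sides determines a bijection on indecomposables which I would assemble into the auto-equivalence $\tau$, and then use the results of \cite{LZ} (characterizing Serre duality precisely in terms of almost split sequences plus the triviality of the relevant ideals) to upgrade this bijection to the functorial Serre duality isomorphism. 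The main obstacle I anticipate is this last upgrade: producing the global functorial isomorphism $D{\rm Ext}^1_\mathcal{C}(X,Y)\cong{\rm Hom}_\mathcal{C}(Y,\tau X)$ natural in both variables, rather than just a correspondence on objects. The almost split sequences give the isomorphism on the "socle" of $D{\rm Ext}^1$, and one must argue that the conditions $\mathcal{C}=\underline{\mathcal{C}}=\overline{\mathcal{C}}$ force this to extend to all of $D{\rm Ext}^1$; here I would lean on the functorial-finiteness and representability machinery of \cite{LZ} rather than reconstruct it, checking that the hypotheses of their characterization are exactly what the two determinacy conditions provide.
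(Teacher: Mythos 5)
Your overall strategy coincides with the paper's: for the ``only if'' direction, use Serre duality to embed $F^\alpha$ into a functor of the form $D{\rm Hom}_\mathcal{C}(C',-)$ and invoke Lemma \ref{lem:RD}; for the ``if'' direction, combine Proposition \ref{prop:REC} with its dual to obtain $\underline{\mathcal{C}}=\mathcal{C}=\overline{\mathcal{C}}$ together with almost split sequences on both sides, and then cite \cite{LZ}. The paper does exactly this, and the ``upgrade'' you worry about (from a correspondence on indecomposables to the functorial isomorphism) is delegated to \cite[Propositions (3.1) and (3.3)]{LZ} in both treatments, so that direction is fine. One small correction in (REC1): with the convention $D{\rm Ext}^1_\mathcal{C}(X,Y)\cong{\rm Hom}_\mathcal{C}(Y,\tau(X))$ one gets ${\rm Ext}^1_\mathcal{C}(-,K)\cong D{\rm Hom}_\mathcal{C}(\tau^{-1}(K),-)$, so an epimorphism with kernel $K$ is right determined by $\tau^{-1}(K)$, not $\tau(K)$.

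The genuine gap is in (REC2). You assert that $F^{(C,H)}$ is finitely presented and that ``in an abelian category one can always take the representing morphism to be surjective onto $Y$''; neither claim is justified as written, and the second is false in general. For instance, in $A\mbox{-mod}$ for an artin algebra $A$ the pair $(C,H)$ with $H={\rm Im}\,{\rm Hom}_A(C,\iota)$ for the sink map $\iota\colon {\rm rad}\,P\to P$ into an indecomposable projective is right represented only by morphisms right equivalent to $\iota$ (Lemma \ref{lem:REHom}), none of which is an epimorphism. What makes both claims true under Serre duality is a specific construction your sketch omits: transport $\theta\colon{\rm Hom}_\mathcal{C}(-,Y)\to D{\rm Hom}_\mathcal{C}(C',-)$ with image $F^{(C,H)}$ across the Serre duality isomorphism to $\theta'\colon{\rm Hom}_\mathcal{C}(-,Y)\to{\rm Ext}^1_\mathcal{C}(-,\tau(C))$, let $\xi\colon 0\to\tau(C)\to X\stackrel{\alpha}\to Y\to 0$ be the extension classified by $\theta'_Y({\rm Id}_Y)$, and check via the Yoneda Lemma that $\theta'_T(t)=\xi.t$, so that $\theta'$ is the connecting morphism of $\xi$ and the sequence ${\rm Hom}_\mathcal{C}(-,X)\to{\rm Hom}_\mathcal{C}(-,Y)\to{\rm Ext}^1_\mathcal{C}(-,\tau(C))$ is exact. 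This simultaneously shows that $F^{(C,H)}\simeq F^\alpha$ is finitely presented and produces the representing morphism $\alpha$ as an epimorphism; without it, (REC2) is not established.
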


\begin{proof}
For the ``only if" part, we assume that $\mathcal{C}$ has Serre duality with its Auslander-Reiten translation $\tau$. We only prove that $\mathcal{C}$ has right determined epimorphisms. Let $\alpha\colon X\rightarrow Y$ be an epimorphism. Denote its kernel by $K$. Then we have an exact sequence in $(\mathcal{C}^{\rm op}, k\mbox{-mod})$
$${\rm Hom}_\mathcal{C}(-, X) \stackrel{{\rm Hom}_\mathcal{C}(-, \alpha)}\longrightarrow {\rm Hom}_\mathcal{C}(-, Y) \longrightarrow {\rm Ext}^1_\mathcal{C}(-, K).
$$
By the Serre duality, ${\rm Ext}^1_\mathcal{C}(-, K)\simeq D{\rm Hom}_\mathcal{C}(\tau^{-1}(K), -)$. It follows that there is a monomorphism $F^\alpha\rightarrow D{\rm Hom}_\mathcal{C}(\tau^{-1}(K), -)$. By Lemma \ref{lem:RD} the morphism $\alpha$ is right determined by $\tau^{-1}(K)$, proving (REC1).

For (REC2), let $C$ be an object and $H\subseteq {\rm Hom}_\mathcal{C}(C, Y)$ be a $\Gamma(C)$-submodule. Consider the morphism $\theta \colon {\rm Hom}_\mathcal{C}(-, Y)
\rightarrow D{\rm Hom}_\mathcal{C}(C', -)$ with $C'\in {\rm add}\; C$ and ${\rm Im}\; \theta=F^{(C, H)}$; see Section 2. By the Serre duality, we obtain a morphism
$$\theta'\colon {\rm Hom}_\mathcal{C}(-, Y)\longrightarrow {\rm Ext}_\mathcal{C}^1(-, \tau(C))$$
with ${\rm Im}\; \theta'\simeq F^{(C, H)}$. Consider the extension $\xi\colon 0\rightarrow \tau(C)\rightarrow X\stackrel{\alpha}\rightarrow Y\rightarrow 0$ corresponding to $\theta'_Y({\rm Id}_Y)$. By the Yoneda Lemma, $\theta_T'(t)=\xi.t$, where $t\colon T\rightarrow Y$ is any morphism and $\xi.t$ denotes the pullback of $\xi$ along $t$. This implies that the following sequence is exact
$${\rm Hom}_\mathcal{C}(-, X) \stackrel{{\rm Hom}_\mathcal{C}(-, \alpha)}\longrightarrow {\rm Hom}_\mathcal{C}(-, Y) \stackrel{\theta'}\longrightarrow {\rm Ext}^1_\mathcal{C}(-, \tau(C)).
$$
Recall the isomorphism ${\rm Im}\; \theta'\simeq F^{(C, H)}$. It follows from Lemmas \ref{lem:RD} and \ref{lem:CH} that the pair $(C, H)$ is right represented by $\alpha$.

For the ``if" part, we assume that $\mathcal{C}$  has right determined epimorphisms and left determined monomorphisms. By Proposition \ref{prop:REC} and its dual, we infer that $\underline{\mathcal{C}}=\mathcal{C}=\overline{\mathcal{C}}$,  and that for any indecomposable object $Y$, there exist an almost split sequences ending at $Y$ and an almost split sequence starting at $Y$. Then $\mathcal{C}$ has Serre duality by \cite[Propositions (3.1) and (3.3)]{LZ}.
\end{proof}

\begin{rem}\label{rem:1}
Let $\mathcal{C}$ be a Hom-finite $k$-linear category having Serre duality, whose Auslander-Reiten translation is denoted by $\tau$.
\begin{enumerate}
\item[(1)] By Theorem \ref{thm:1} and Lemma \ref{lem:REC}, a morphism $\alpha\colon X\rightarrow Y$ is right determined by some object if and only if it is an epimorphism, in which case $\alpha$ is right determined by $\tau^{-1}({\rm Ker}\; \alpha)$; dually, a morphism $\beta\colon Y\rightarrow Z$ is left determined by some object if and only if it is a monomorphisms, in which case $\beta$ is left determined by $\tau({\rm Cok}\; \beta)$.
\item[(2)] We assume that $\mathcal{C}$ is not zero. Then a morphism that is not epic is not right determined by any object, and thus the category $\mathcal{C}$ does not have right determined morphisms in the sense of Definition \ref{defn:1}. By Proposition \ref{prop:Dua}, the category $\mathcal{C}$ is not a dualizing $k$-variety. However, its bounded derived category $\mathbf{D}^b(\mathcal{C})$ has Serre duality \cite{RV} and thus is a dualizing $k$-variety; see \cite[Theorem 4.2]{Kr} or \cite[Corollary 2.6]{Chen}.
\end{enumerate}
\end{rem}

\vskip 5pt

\noindent{\bf Acknowledgements}\quad The authors would like to thank Professor Henning Krause for helpful discussions on the results in this paper, and to Professor Claus Michael Ringel for enlightening discussions concerning morphisms determined by modules.

\bibliography{}

\vskip 10pt

 {\footnotesize \noindent Xiao-Wu Chen, Jue Le \\
 School of Mathematical Sciences,
  University of Science and Technology of
China, Hefei 230026, Anhui, PR China \\
Wu Wen-Tsun Key Laboratory of Mathematics, USTC, Chinese Academy of Sciences, Hefei 230026, Anhui, PR China.\\
URL: http://home.ustc.edu.cn/$^\sim$xwchen, http://staff.ustc.edu.cn/$^\sim$juele.}

\end{document}